\newcommand{\R}{\mathbb{R}}
\newcommand{\Z}{\mathbb{Z}}
\newcommand{\T}{\mathbb{T}}
\newcommand{\E}{\mathbb{E}}
\newcommand{\beq}{\begin{equation}}
\newcommand{\eeq}{\end{equation}}
\def\b{\beta}
\def\G{\Gamma}
\def\t{\tau}
\def\pd{\partial}
\newcommand{\cP}{{\cal P}}
\newcommand{\fDT}{D_{[t,T)}^{1-\beta}}
\newcommand{\fDz}{D_{(0,t]}^{1-\beta}}
\newcommand{\fdT}{\partial_{[t,T)}^{\beta}}
\newcommand{\fdz}{\partial_{(0,t]}^{\beta}}
\newcommand{\fIz}{I_{(0,t]}^{\beta}}
\newcommand{\diver}{{\rm div}}
\newtheorem{theorem}{Theorem}[section]
\newtheorem{lemma}[theorem]{Lemma}
\newtheorem{definition}[theorem]{Definition}
\newtheorem{proposition}[theorem]{Proposition}
\newtheorem{remark}[theorem]{Remark}
\numberwithin{equation}{section}
\def\ps@pprintTitle{%
   \let\@oddhead\@empty
   \let\@evenhead\@empty
   \let\@oddfoot\@empty
   \let\@evenfoot\@oddfoot
}
\begin{document}
\begin{frontmatter}
\title{\Large \bf Variational time-fractional Mean Field Games}

\author{Qing Tang}  
\address{China University of Geosciences, Wuhan, China}
\ead{tangqingthomas@gmail.com} 

\author{Fabio Camilli}
\address{Universit\`{a} di Roma ``La Sapienza", Italy}
\ead{camilli@sbai.uniroma1.it}

\begin{abstract}
	We consider the variational structure of a time-fractional second order Mean Field Games (MFG) system. The MFG system consists of   time-fractional Fokker-Planck  and   Hamilton-Jacobi-Bellman equations.  In such a situation the individual agent follows a non-Markovian  dynamics given by  a subdiffusion process. Hence, the results of this paper extend the theory of variational MFG to the subdiffusive situation.  
\end{abstract}
%%%
 \begin{keyword}
 subdiffusion;  subordinator; optimal control; fractional Fokker-Planck equation;  fractional Hamilton-Jacobi-Bellman equation; variational Mean Field Games.
 \end{keyword}

\end{frontmatter}

%%%%%%%%%%%%%%%%%%%%%%%%%
%   Introduction        %
%%%%%%%%%%%%%%%%%%%%%%%%%
\section{Introduction}
 The theory of Mean Field Games (MFGs for short) studies the interactions among  large number of rational and indistinguishable (symmetric) agents, each trying to minimize an objective cost function simultaneously. The dynamics is very complex, but model reduction is possible by assuming each agent's impact on the macroscopic dynamics is negligible and the number of agents $N$ tends to $\infty$. Each agent is assumed to have rational anticipations and chooses its control strategy by taking into account in its cost function the collective behaviors of other agents (congestion effect, for example) in the form of a probability distribution $m$ of the population. The Nash equilibrium for the differential game with a large number of agents  is modeled by a coupled system consisting of a backward Hamilton-Jacobi-Bellman (HJB) equation and a forward Fokker-Planck (FP) equation. The HJB equation describes the value function of each agent and the FP equation is used to describe the evolution of the probability measure
 driven by the optimal control. The theory of Mean Field Games originated from the works of Lasry and Lions \cite{Lasry2006,Lasry2007} and  independently  started by Huang, Caines and Malham\'e \cite{Huang2007}. For a general introduction of Mean Field Games we refer to \cite{Cardaliaguet2010, Gueant2010,Gomes2014,Carmona2018}.\\
\indent The theory of variational MFG is based on the dynamic formulation of the  optimal transport problem by Benamou and Brenier \cite{Benamou2000}. The essential idea is to show that MFG system can be viewed as an optimality condition for two convex problems, the first one being an optimal control Hamilton-Jacobi equations, the second one an optimal control problem for a Fokker-Planck equation. Variational  approach has already been considered in one of the first papers in mean field games \cite{Lasry2007}. In \cite{Cardaliaguet_2015}, Cardaliaguet obtained the existence and uniqueness of a weak solution for first order mean field game systems with local couplings by variational methods. From the variational approach Cardaliaguet et al. also considered the degenerate second order MFGs \cite{Cardaliaguet2015}. Our research on variational MFG is mainly motivated by the theory developed in \cite{Benamou2017} by Benamou, Carlier and Santambrogio. Recent progress in this direction include MFG systems with density constraints \cite{Meszaros2015}, entropy minimization \cite{Benamou2018} and stable solutions to MFG systems \cite{Briani2018}.\\
\indent In this work, we study the a time-fractional variational MFG and derive by optimality conditions the time-fractional MFG system   
\begin{equation}\label{MFG}
\left\{
\begin{array}{lll}
-\partial_tu+ \fDT[- \Delta u  +\frac{1}{2}|\nabla u|^2]=f(x,m),& (t,x) \in (0,T) \times \R^d\\[4pt]
\partial_t m -  [ \Delta \cdot+ \diver (\nabla u\cdot)] (\fDz m) = 0,\\
m(0,x) = m_0(x) ,\quad u(T,x) = u_T(x),
\end{array}
\right.
\end{equation}
where $\fDT$ and $\fDz$ denote the backward and  forward Riemann-Liouville fractional derivatives. To avoid complications arising from boundary conditions we work in the flat $d$-dimensional torus  $\T^d=\R^d\backslash \Z^d$.  The
term $f$  associates to a probability density $m$ a real valued function $f(x,m)$. 
% This system \eqref{MFG} is a particular case of the MFG system studied in \cite{Camilli2018}, in which the authors obtained  existence of a
%solution by a fixed point argument; moreover they gave uniqueness results under some suitable monotonicity conditions.
\par
%%%%%%%%%%

\indent This system has first been proposed by Camilli, De Maio in \cite{Camilli2018} to model MFGs with agents'  dynamics under subdiffusive regime.The special feature of this MFG system can be summarized as follows:
\begin{itemize}
\item[i.] On a microscopic level, the dynamics of each single agent is governed by a continuous time random walk (CTRW) dynamics such that the agent pauses for a certain waiting time before resuming motion. The waiting time is described by a random variable distributed according to a power-law function with heavy-tail. The limit process is described as a time-changed stochastic differential equation where the new time scale is given by an inverse stable subordinator. Important feature: the dynamics is non-Gaussian and non-Markovian. 
\item[ii.] On a macroscopic level, the evolution of the probability distribution of the agents is described by a time-fractional FP equation. The value function is characterized  by a time-fractional HJB equation. The time fractional derivatives have nonlocal structure.
\end{itemize}
\indent The theory of CTRW for modeling anomalous diffusion has been very topical in recent years in physics, biology and finance literature, e.g. \cite{Bouchaud1990,Metzler2000,Metzler1999,Magdziarz2007,Magdziarz2009,Weron2008,Magdziarz2014,Henry2010,Scalas2000}. The non-Markovian structure of subdiffusion makes it very instrumental in modelling long-memory effects, path dependent features or trading latency in finance. Some early applications of fractional calculus in finance can be found in \cite{Scalas2000}. In \cite{Bouchaud2004}, Bouchaud et al.  constructed a model in which liquidity providers (``market-makers") can act to create anti-persistence (or mean reversion) in price changes that would lead to a subdiffusive behaviour of the price and the model was calibrated with data. Recently, Bouchaud et al.  proposed an original application of subdiffusions to describe the dynamics of supply and demand in financial markets \cite{Benzaquen2018}.\\
\indent The applications in anomalous transport and diffusion modeling stimulated a surge of interest in the study of fractional (nonlocal in time) partial differential equations. Recently,  weak solution to time-fractional parabolic equations has been considered in \cite{Allen2016,Zacher2013,Liu2018}. In \cite{Zacher2008,Zacher2013}, Zacher obtained the weak maximum principle for time-fractional parabolic type equations. In \cite{Kolokoltsov2014}, Kolokoltsov et al.  studied a time-fractional Hamilton-Jacobi equation.  The viscosity solution theory for time-fractional PDEs was also developed in some recent works, e.g. \cite{Giga2017, Namba2018, Topp2017}. Camilli, De Maio et al. \cite{Camilli20182} introduced a Hopf-Lax formula for the solution of a fractional Hamilton-Jacobi equation. In \cite{Camilli2019}, Camilli and Goffi considered weak solutions in Sobolev space to time fractional Hamilton-Jacobi equations. In \cite{Ley2019}, Ley, Topp et al. considered the long time behavior of solutions to time fractional Hamilton-Jacobi equations.\\
\indent The paper is organized as follows. In Section \ref{sec_frac}, we review some basic facts about the fractional calculus, introduce the class of subdiffusive processes, obtain some facts about stochastic integration and It\^o's formula with regard to time-changed processes and use them to obtain the weak formulation of time-fractional Fokker-Planck (FP) equation. In Section \ref{sec_MFG}, we derive the MFG system \eqref{MFG} as the optimality condition of control problems driven by partial differential equations (HJB or FP equations). We also give verification results regarding the equivalence between solutions of MFG system and the overall optimization problems. Finally, we discuss some interesting directions of investigation for future work.\par

%%%%%%%%%%%%%%%%%%%%%%%
%                                                                       %
\section{Fractional calculus, CTRW and time-fractional FP equation}\label{sec_frac}
We start with a brief introduction to some basic results in fractional calculus. We refer to  Samko et al.  \cite{Samko1993} for a
comprehensive account of the theory. The idea of defining a derivative of fractional order ($\frac{1}{2}$ for example) dates back to Leibniz. This problem has also been considered by Riemann and Liouville among others in 19th century. The nonlocal operators under the name Riemann-Liouville derivative and integral are  the most important definitions in this subject to this day. In the first half of 20th century advances has been made by Hardy and Littlewood (\cite{Hardy1928}), and the work of Marchaud (\cite{Marchaud1927}). \\
\indent Throughout this section, we always assume that $\b\in (0,1)$.
For   $\phi:(0,T) \to \R$, the forward and backward Riemann-Liouville fractional integrals are defined by
\begin{align}
I^{\b}_{(0,t]} \phi:=\frac{1}{\G(\b)}\int_0^t \phi(\t)\frac{1}{(t-\t)^{1-\b}}d\t, \\
I^{\b}_{[t,T)} \phi:=\frac{1}{\G(\b)}\int_t^T \phi(\t)\frac{1}{(\t-t)^{1-\b}}d\t.
\end{align}
The fractional integrals are bounded linear operators over $L^p(0,T)$, $p\ge 1$; indeed, by H\"older's inequality,   if $f \in L^p(0,T)$, then
\[
\|I^\b_{(0,t]} \phi\|_{L^p} \leq \frac{T^{ \b}}{\b\G(\b)}\|\phi\|_{L^p}.
\]
The forward  Riemann-Liouville and Caputo derivatives are defined by
\begin{equation}\label{RL}
 D^{\b}_{(0,t]}  \phi:=\frac{d}{dt} \left[I^{1-\b}_{(0,t]} \phi \right]=\frac{1}{\G(1-\b)}\frac{d}{dt} \int_0^t \frac{\phi (\t)}{(t-\t)^\b}d\t,
   \end{equation}
 \begin{equation}\label{Def Caputo deriv}
 \fdz \phi:= I^{1-\b}_{(0,t]}\left[\phi'(\tau)\right]=\frac{1}{\G(1-\b)} \int_0^t \frac{\phi'(\tau)}{(t-\t)^\b}d\t,
  \end{equation}
while the   backward Riemann-Liouville and Caputo derivatives  are defined by
\begin{equation}
D^{\b}_{[t,T)} \phi:=-\frac{d}{dt} \left[I^{1-\b}_{[t,T)} \phi \right]=-\frac{1}{\G(1-\b)}\frac{d}{dt} \int_t^T\frac{ \phi (\t)}{(\t-t)^\b}d\t,   
\end{equation}
\begin{equation}
\fdT \phi:=-I^{1-\b}_{[t,T)} \left[\phi'(\tau) \right]=-\frac{1}{\G(1-\b)} \int_t^T \frac{\phi'(\tau)}{(t-\t)^\b}d\t.
\end{equation}
For $\beta \rightarrow 1$ the  Riemann-Liouville and Caputo derivatives of a smooth function $\phi$ converge to the classical derivative $\frac{d\phi}{dt}$, i.e. fractional derivatives are an extension of standard derivatives. In particular:
\begin{equation*}
D^{1-\b}_{(0,t]}\cdot 1=\frac{1}{\G(\b)}\frac{d}{dt} \int_0^t \frac{1}{(t-\t)^{1-\b}}d\t=\frac{t^{\b-1}}{\G(\b)}.
\end{equation*}
Since we can characterize the fractional integral of $\phi(t)$ as the Laplace convolution  \cite{Samko1993}:
$$
\fIz \phi=\phi(t)*\frac{t_+^{\b-1}}{\Gamma(\b)},
$$
 where $t_+=\max\{t,0\}$, the Laplace transforms of fractional integral and derivative can be obtained by direct computation and use of \eqref{RL}, e.g. Section 7.2, Chapter 2 \cite{Samko1993}:
\begin{align}
&\widehat{I_{(0,t]}^{\b} \phi}(k)=k^{-\b}\hat{\phi}(k),\\
&\widehat{\fDz \phi}(k)=k^{1-\b}\hat{\phi}(k)-I^{\b}_{(0,t]}\phi|_{t\rightarrow 0^+}.
\end{align}
The following integration by parts formula with fractional derivatives is well known, see e.g. (2.64) of Chapter 1 in \cite{Samko1993}.
\begin{equation}\label{byparts} 
 \int_0^T \int_{\T^d} \phi(t,x) D_{(0,t]}^{1-\beta}k(t,x)\,\,dxdt= \int_0^T \int_{\T^d} k(t,x) D_{[t,T)}^{1-\beta}\phi(t,x)\,\,dxdt,
\end{equation}
for $\phi, k\in C^1([0,T]\times \T^d)$.\\
\indent This motivates the definition of the following weak formulation of Riemann-Liouville derivatives (in the sense of distributions), e.g. Section 8, Chapter 2 \cite{Samko1993}:
\begin{definition}\label{byparts} Let   $u(t,x) \in L^1([0,T]\times \T^d)$, then
\begin{align}
\langle D_{(0,T]}^{1-\beta}u, \phi \rangle = \int_0^T \int_{\T^d}u(t,x) D_{[t,T)}^{1-\beta}\phi(t,x)\,\,dxdt, \label{bypart1}\\
\langle D_{[t,T)}^{1-\beta}u, \phi \rangle = \int_0^T \int_{\T^d} u(t,x) D_{(0,t]}^{1-\beta}\phi(t,x)\,\,dxdt, \label{bypart2}
\end{align}
for every $\phi(t,x) \in C^{\infty}_{c}([0,T]\times \T^d)$, where $\langle,\rangle$ denotes the duality between $C^{\infty}_{c}([0,T]\times \T^d)$ and distributions.
\end{definition}

\indent We proceed to the consider the dynamics of a single agent under subdiffusive regime. We denote by $X_t$ the position of particle at time $t$ with initial position  $x_0$, such that :
\begin{equation}\label{sdegno}
\left\{
\begin{array}{ll}
dY_t = v(D_t, Y_t)dt + \sqrt{2}\,dB_t,\\
Y_0 = x_0,\, D_0 = 0,
\end{array}
\right.
\end{equation}
and 
\begin{equation}\label{E_t}
   X_t=Y_{E_t}.
\end{equation}
where  $B_t$ is a Brownian motion in $\T^d$ and
$E_t$ is  the inverse of a $\beta$-stable subordinator $D_{t}$, i.e.
\begin{equation} \label{E_t}
E_t := \inf\{\tau>0 : D_\tau>t\},\hspace{2 mm}t\geq 0.
\end{equation}
The drift term $v\in L^{\infty}([0,T];W^{1,\infty}(\T^d))$. \\
\indent From physics perspective, the subordinated process $X_t$ can be interpreted in the following sense (\cite{Magdziarz2009}): $t$ is as an external  time scale, while the subordinator $E_t$ is a as an internal time scale which introduces trapping events in the motion. Between two jumps when the particle is not trapped, the process moves according to a standard diffusion process $Y_t$ since it holds $ D_{E_t}=t$. \par
\indent Let $\vartheta (t,\tau)$ denotes the density of process $D_t$. Then the Laplace transform of $D_t$ satisfies 
\begin{equation}\label{D_t transform}
 \E[e^{-k D_{t}}]=\int_0^{\infty}e^{-kt} \vartheta (t,\tau)dt=e^{-\tau k^\beta}.
\end{equation}
The process $E_t$  is continuous and nondecreasing, moreover  for any $t, \gamma > 0$  its  $\gamma$-moment is given by
\begin{equation*}
\mathbb{E}(E_t^{\gamma}) =\frac{t^{\beta\gamma}}{\G (\b+1)}\,.
\end{equation*}
 Note that the process $E_t$ does not have stationary and independent increments. And because of this the stochastic process $X_t$ is non-Markovian.\\
\indent The following FP equation has been widely used in the literature for describing the evolution of the law of subdiffusion processes while the dynamics of each individual agent is described by \eqref{sdegno} and \eqref{E_t}:
\begin{equation}\label{FPE}
\left\{
\begin{array}{lll}
\partial_t m -  [ \Delta \cdot+ \diver (v(t,x) \cdot)] (\fDz m) = 0,\\
m(0,x) = m_0(x).
\end{array}
\right.
\end{equation}
\indent The derivation of equation \eqref{FPE} can be found in \cite{Metzler1999,Magdziarz2014,Henry2010}.
In this paper, the notion of solution to the fractional FP equation \eqref{FPE} will be  in the sense of distributions.\\
\indent We denote by $\mathcal{P}_1(\T^d)$ the set of Borel probability measures over $\T^d$. It is endowed with the Kantorovich-Rubinstein distance (which metricizes the weak* convergence):
\begin{align*}
d(m_{1},m_{2}):=\sup_{\phi} \int_{\T^{d}} \phi \,d(m_{1}-m_{2}),
\end{align*}
where the supremum is taken over the set of Lipschitz continuous maps $\phi:\T^d \rightarrow \R$ which are Lipschitz continuous with constant $1$.\\

\begin{definition}\label{weak2}
Given $m_0\in \cP_1(\T^d)$,  $m\in L^1([0,T],\cP_1(\T^d))$ is said to be a weak solution to \eqref{FPE} with the initial condition $m( 0)=m_0 \in\cP_1(\T^d)$ if for any test function $\phi\in C^\infty_c([0,T)\times \T^d)$, we have
\[\int_{\T^d}\phi(0,x)m(0,x)dx+\int_0^T\int_{\T^d}\Big[\pd_t\phi+\fDT\big( \Delta \phi   + v(t,x)\cdot \nabla \phi\big)   \Big]m(t,x)dxdt=0.\]
\end{definition}

\begin{theorem}\label{Th_Weak_FP}
If $m_0$ is the law of $X_0$, then the law of  $X_t $ is a weak solution of the fractional FP equation  \eqref{FPE} in $C^{\beta/2}([0,T];\cP(\T^d))$. 
\end{theorem}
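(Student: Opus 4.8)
The plan is to test the law of $X_t$ against an arbitrary $\phi\in C^\infty_c([0,T)\times\T^d)$ and recover exactly the weak identity of Definition~\ref{weak2}. First I would apply the It\^o formula for time-changed processes developed in Section~\ref{sec_frac} to $\phi(t,X_t)=\phi(t,Y_{E_t})$. Because the time change $E_t$ is continuous, the explicit time dependence of $\phi$ is differentiated in the physical time $t$, whereas the spatial contribution comes from the generator of $Y$ evaluated along the time-changed clock; using the time-changed SDE $dX_t=v(D_{E_t},X_t)\,dE_t+\sqrt2\,dB_{E_t}$ together with $D_{E_t}=t$ and $[B_{E}]_t=E_t$, this yields
\[
\phi(t,X_t)=\phi(0,X_0)+\int_0^t\partial_r\phi(r,X_r)\,dr+\int_0^t\big(\Delta\phi+v\cdot\nabla\phi\big)(r,X_r)\,dE_r+\sqrt2\int_0^t\nabla\phi(r,X_r)\cdot dB_{E_r}.
\]
Taking $t=T$ (so that $\phi(T,\cdot)\equiv0$) and expectations, the time-changed Brownian term is a martingale and drops out, justified by localization, the boundedness of $\nabla\phi$ and the integrability of $E_T$ (from $\E[E_T^\gamma]=T^{\beta\gamma}/\G(\beta+1)$). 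Writing $\E[\phi(0,X_0)]=\int_{\T^d}\phi(0,x)m_0(x)\,dx$ and $\E\int_0^T\partial_r\phi(r,X_r)\,dr=\int_0^T\int_{\T^d}\partial_t\phi\,m\,dx\,dt$, the whole statement reduces to the single subordination identity
\[
\E\int_0^T\big(\Delta\phi+v\cdot\nabla\phi\big)(r,X_r)\,dE_r=\int_0^T\int_{\T^d}\fDT\big(\Delta\phi+v\cdot\nabla\phi\big)(t,x)\,m(t,x)\,dx\,dt .
\]

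Next I would establish this identity. Conditioning on the $\sigma$-algebra $\cG$ generated by the subordinator and using the independence of $B$ and $D$, the integrand becomes deterministic in the clock variable, and the change of variables $u=E_r$ (equivalently $r=D_u$ on the set where $E$ strictly increases, the flat parts carrying no $dE$-mass) turns the operational-time integral into $\int_0^\infty\int_{\T^d}g(D_u,y)\,\rho^{D}(u,y)\,dy\,du$, where $g=\Delta\phi+v\cdot\nabla\phi$ and $\rho^{D}(u,\cdot)$ is the conditional density of $Y_u$. The mechanism is most transparent in the pure-diffusion case, where $\rho(u,\cdot)=e^{u\Delta}m_0$ and one takes Laplace transforms in $t$: the inverse $\beta$-stable renewal density has transform $k^{\beta-1}e^{-sk^\beta}$, so subordination sends the operational heat semigroup into the fractional resolvent and gives, in Laplace variables, $k\widehat m-m_0=k^{1-\beta}\Delta\widehat m$, i.e.\ the strong form $\partial_tm=\Delta\,\fDz m$. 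Inverting and restoring the drift yields $\partial_tm=[\Delta\cdot+\diver(v\cdot)]\fDz m$, and moving the spatial operators and the Riemann--Liouville derivative onto the smooth test function through the fractional integration-by-parts formula~\eqref{byparts} produces precisely $\fDT g$ acting on $\phi$, which is the displayed identity. The emergence of the kernel $t^{\beta-1}/\G(\beta)$ here is exactly the one already visible in the computation $D^{1-\beta}_{(0,t]}\cdot1=t^{\beta-1}/\G(\beta)$.

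For the regularity $m\in C^{\b/2}([0,T];\cP(\T^d))$ I would estimate the Kantorovich distance directly: for $t'<t$,
\[
d\big(\mathrm{law}(X_t),\mathrm{law}(X_{t'})\big)\le\E|X_t-X_{t'}|=\E\,|Y_{E_t}-Y_{E_{t'}}|\le \|v\|_\infty\,\E|E_t-E_{t'}|+\sqrt2\,\E|B_{E_t}-B_{E_{t'}}| .
\]
The Brownian term dominates and, by conditioning on $E$ and Brownian scaling, is controlled by $\E|E_t-E_{t'}|^{1/2}$; monotonicity of $E$ and the moment formula $\E[E_t^{\gamma}]=t^{\beta\gamma}/\G(\beta+1)$ give $\E|E_t-E_{t'}|^{1/2}\lesssim|t-t'|^{\beta/2}$, whence the H\"older exponent $\beta/2$. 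That $m(t,\cdot)\in\cP_1(\T^d)$ for each $t$ and that $m(0,\cdot)=m_0$ are immediate, since $m(t,\cdot)$ is by definition the law of $X_t$ on the compact torus.

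The main obstacle is the subordination identity of the second paragraph, and specifically the coupling between the process and its own clock inside the Stieltjes integral $\int_0^Tg(r,X_r)\,dE_r$: one cannot split $\E[g\,dE]$ as $\E[g]\,\E[dE]$ because $X_r=Y_{E_r}$ depends on the whole path of $E$ (and, through the drift $v(D_s,Y_s)$, on that of $D$ as well). Handling this rigorously for a genuine $D$-dependent drift is where the work lies; it rests on the independence of $B$ and the subordinator together with the explicit renewal/Laplace structure of the inverse $\beta$-stable subordinator, in the spirit of \cite{Magdziarz2009,Magdziarz2014,Henry2010} and of the derivation of \eqref{FPE} recalled after its statement. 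Everything else --- the vanishing of the martingale, the interchange of expectation and integration, and the H\"older estimate --- is routine given the moment bounds for $E_t$.
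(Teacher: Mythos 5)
The paper never actually proves Theorem \ref{Th_Weak_FP}: the statement is imported from \cite{Camilli2018} without argument, so your proposal can only be judged against the standard derivation. Your skeleton is the right one --- time-changed It\^o formula for $\phi(t,Y_{E_t})$, expectation killing the $dB_{E_r}$-martingale, a subordination identity converting the $dE_r$-integral into a fractional derivative, and a H\"older estimate from the moments of $E_t$. The regularity part is essentially complete as written: $d(m_t,m_{t'})\le\E|X_t-X_{t'}|$, the drift contributes $\|v\|_\infty\,\E|E_t-E_{t'}|\lesssim|t-t'|^{\b}$ by monotonicity of $E$ and $\E E_t=t^{\b}/\G(\b+1)$, and conditioning on $E$ plus Jensen bounds the Brownian part by $(\E|E_t-E_{t'}|)^{1/2}\lesssim|t-t'|^{\b/2}$.

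The genuine gap is exactly where you locate it. The identity $\E\int_0^T g(r,X_r)\,dE_r=\int_0^T\int_{\T^d} g\,\fDz m\,dx\,dt$ is only argued for time-independent drift via Laplace transforms, and ``inverting and restoring the drift'' is not a proof: when $v$ depends on $D_s$ the conditional law of $Y_u$ is no longer a semigroup applied to $m_0$, and the factorized subordination formula $\hat m(k,x)=\int_0^\infty\rho(\tau,x)k^{\b-1}e^{-\tau k^{\b}}d\tau$ fails --- the paper's own remark after Lemma \ref{positivity} stresses that this form is special to $v=v(x)$. The gap closes, however, with machinery already in the paper and without any conditioning on the clock or Laplace inversion. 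Let $q_\tau(t,x)$ be the joint density of $(D_\tau,Y_\tau)$ from the proof of Lemma \ref{positivity}, so that $\fDz m(t,x)=\int_0^\infty q_\tau(t,x)\,d\tau$. The change of variables $\tau=E_r$ (the flat pieces of $E$ carry no $dE$-mass, and $E_{D_\tau}=\tau$) gives $\int_0^T g(r,X_r)\,dE_r=\int_0^{E_T}g(D_\tau,Y_\tau)\,d\tau$, and since $\{\tau<E_T\}=\{D_\tau<T\}$ up to a null set,
\[
\E\int_0^{E_T}g(D_\tau,Y_\tau)\,d\tau=\int_0^\infty\E\bigl[g(D_\tau,Y_\tau)\ind{D_\tau<T}\bigr]\,d\tau=\int_0^T\int_{\T^d}g(t,x)\Bigl(\int_0^\infty q_\tau(t,x)\,d\tau\Bigr)dx\,dt,
\]
which is $\int_0^T\int_{\T^d}g\,\fDz m\,dx\,dt$; the integration by parts \eqref{byparts} then moves $D^{1-\b}$ onto $g=\Delta\phi+v\cdot\nabla\phi$ and produces exactly Definition \ref{weak2}. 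With this substitution your argument is complete, modulo the representation $\fDz m=\int_0^\infty q_\tau\,d\tau$ itself, which the paper also takes on faith from \cite{Henry2010,Magdziarz2014}.
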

%%%

\indent We proceed to give the analytic proof of the mass preservation and positivity of solutions to equation \eqref{FPE}. 
\begin{lemma}\label{mass conservation}
If $m(t,x)$ is the solution to the fractional FP equation \eqref{FPE}, then $\int_{\T^d}m(t,x)dx=\int_{\T^d}m(0,x)dx$ for all $t\in [0,T]$.
\end{lemma}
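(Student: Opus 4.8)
The plan is to test the weak formulation of Definition \ref{weak2} against functions that are constant in space. Such test functions annihilate both the Laplacian and the fractional transport term, so the identity reduces to a one–dimensional distributional statement for the total mass. Concretely, set $M(t):=\int_{\T^d} m(t,x)\,dx$. Formally, integrating \eqref{FPE} over $\T^d$ makes the right–hand side vanish, since the operator $\Delta\cdot+\diver(v\cdot)$ is in divergence form and its spatial integral over the boundaryless torus is zero; this gives $\pd_t M=0$. Because solutions are understood in the sense of distributions, this computation must be realized through test functions.

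First I would choose $\phi(t,x)=\eta(t)$ with $\eta\in C^\infty_c([0,T))$, which is admissible in $C^\infty_c([0,T)\times\T^d)$ as $\T^d$ is compact. For such $\phi$ one has $\nabla\phi\equiv 0$ and $\Delta\phi\equiv 0$, hence $\fDT\big(\Delta\phi+v\cdot\nabla\phi\big)=\fDT(0)=0$, and the weak formulation collapses to
\[
\eta(0)\,M(0)+\int_0^T \eta'(t)\,M(t)\,dt=0 .
\]
Restricting first to $\eta\in C^\infty_c((0,T))$ removes the boundary term and yields $M'=0$ in $\mathcal{D}'((0,T))$, so $M$ equals a constant $c$ for a.e. $t$. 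Feeding this back into the identity with a general $\eta$ satisfying $\eta(0)\neq 0$, and recalling that the initial term is $\eta(0)\int_{\T^d} m_0\,dx$, gives $\eta(0)\big(M(0)-c\big)=0$, whence $c=M(0)=\int_{\T^d} m_0\,dx$.

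The points to watch, none of them deep, are the following. One must check that a space–independent $\eta(t)$ genuinely lies in the admissible class and that the fractional term is legitimately zero; this is immediate, since $\fDT$ acts on the $t$ variable of a function that is identically $0$. Next, the interplay between the compact support in $[0,T)$, which forces $\eta(T)=0$ but permits $\eta(0)\neq 0$, must be handled so that the initial datum is captured and no spurious contribution appears at $t=T$. Finally, the passage from the distributional identity to the a.e. statement $M(t)\equiv M(0)$ requires only $M\in L^1(0,T)$, which is guaranteed by $m\in L^1([0,T],\cP_1(\T^d))$; to upgrade ``a.e.\ $t$'' to ``every $t\in[0,T]$'' as in the statement, I would invoke the continuity of $t\mapsto M(t)$ coming from the $C^{\beta/2}([0,T];\cP(\T^d))$ regularity of Theorem \ref{Th_Weak_FP}. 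The only genuine bookkeeping effort lies in correctly tracking the boundary term at $t=0$.
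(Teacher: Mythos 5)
Your argument is correct and rests on the same structural observation as the paper's proof: the spatial operator in \eqref{FPE} is in divergence form, so integrating over the boundaryless torus annihilates it and the total mass $M(t)=\int_{\T^d}m(t,x)\,dx$ has vanishing time derivative. The difference is one of execution. The paper integrates the equation directly in $x$, pulls $\frac{d}{dt}$ outside the integral, and integrates in time --- a formal computation that implicitly assumes $m$ is regular enough for these manipulations. You run the same computation through the weak formulation of Definition \ref{weak2} with space-constant test functions $\phi=\eta(t)$, obtain $M'=0$ in the sense of distributions on $(0,T)$, recover the value of the constant from the boundary term at $t=0$ (using that $\eta$ may be nonzero there while vanishing near $T$), and invoke the $C^{\beta/2}$-in-time regularity of Theorem \ref{Th_Weak_FP} to upgrade the a.e.\ identity to all $t\in[0,T]$. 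This is more faithful to the paper's own stipulation that solutions of \eqref{FPE} are understood in the sense of distributions, and it correctly isolates the one extra ingredient (time continuity of $t\mapsto M(t)$) needed to pass from ``a.e.\ $t$'' to ``every $t$.'' Both routes yield the same conclusion; yours costs slightly more bookkeeping and in exchange covers genuinely weak solutions rather than classical ones.
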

\begin{proof}
Integrating on domain $\T^d$ with respect to $x$ for equation \eqref{FPE},
$$
\int_{\T^d}\partial_t m dx-  \int_{\T^d} \Delta (\fDz m)dx- \int_{\T^d}\diver (v (\fDz m))dx = 0.
$$
From integration by parts,
$$
\int_{\T^d} \Delta (\fDz m)dx=\int_{\T^d} \diver(\nabla \fDz m)dx=0,
$$
and
$$
\int_{\T^d}\diver (v(\fDz m))dx = 0.
$$
Therefore
$$
\frac{d}{dt}\int_{\T^d} mdx=\int_{\T^d}\partial_t mdx=0.
$$
By integrating from $0$ to $t$ we conclude $\int_{\T^d}m(t,x)dx=\int_{\T^d}m(0,x)dx$.
\end{proof}

\begin{lemma}\label{Positive m}
A solution $m(t,x)$ to the FP equation \eqref{FPE} preserves positivity: If $m_0(x)>0$, then $m(t,x)>0$.
\end{lemma}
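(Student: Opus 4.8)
My plan is to reduce \eqref{FPE} to a standard Caputo-form time-fractional Fokker--Planck equation, establish non-negativity by a fractional $L^2$ energy estimate exploiting the divergence structure of the drift, and then upgrade to strict positivity via the subordination representation.

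First I would rewrite \eqref{FPE} in Caputo form. Since $\fDz=\frac{d}{dt}I^{\b}_{(0,t]}$ is the Riemann--Liouville derivative of order $1-\b$, I would apply the fractional integral $I^{1-\b}_{(0,t]}$ to both sides, use $I^{1-\b}_{(0,t]}\fDz m=m$ (the boundary term $I^{\b}_{(0,t]}m|_{t\to0^+}$ vanishes for bounded $m$), and commute the spatial operator $\Delta\cdot+\diver(v\,\cdot)$ with the time operators. This yields the equivalent equation
\begin{equation}\label{caputo-form}
\fdz m=\Delta m+\diver(v\,m),\qquad m(0,\cdot)=m_0 .
\end{equation}

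Next, to prove $m\ge0$, I would run a fractional energy estimate on the negative part $m_-:=\max(-m,0)\ge0$. Testing \eqref{caputo-form} against $-m_-$ and integrating over $\T^d$: the diffusion term gives $-\norm{\nabla m_-}_{L^2}^2\le0$ after integration by parts, while the drift term, thanks to its divergence form, reduces to $\half\int_{\T^d}(\diver v)\,m_-^2\,dx\le\half\norm{\diver v}_{L^\infty}\norm{m_-}_{L^2}^2$. For the time term I would invoke the convexity (fractional chain-rule) inequality $\psi'(m)\,\fdz m\ge\fdz\psi(m)$, valid for convex $\psi$, applied to $\psi(s)=\half(s_-)^2$ with $\psi'(s)=-s_-$, which gives $\int_{\T^d}(-m_-)\,\fdz m\,dx\ge\fdz\big(\half\norm{m_-}_{L^2}^2\big)$. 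Writing $y(t):=\half\norm{m_-(t)}_{L^2}^2$, these combine into the fractional differential inequality $\fdz y\le\norm{\diver v}_{L^\infty}\,y$ with $y(0)=0$ (as $m_0>0$). A fractional Gr\"onwall/comparison argument then forces $y\equiv0$, i.e. $m\ge0$.

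Finally, for the strict inequality I would use the subordination representation of Theorem \ref{Th_Weak_FP}: $X_t=Y_{E_t}$ with $Y$ a uniformly elliptic diffusion independent of the inverse subordinator $E_t$, so that $m(t,\cdot)=\int_0^\infty p^Y_\tau(\cdot)\,f_{E_t}(\tau)\,d\tau$, where $p^Y_\tau$ admits a strictly positive Gaussian lower bound for $\tau>0$ and $E_t>0$ carries positive mass on $(0,\infty)$; hence $m(t,x)>0$. The main obstacle I anticipate is regularity: the chain-rule inequality, the integrations by parts, and the Gr\"onwall step are transparent only for smooth solutions, so for the merely distributional solutions of Definition \ref{weak2} I expect to need a regularization/approximation scheme (mollifying $m_0$ and $v$, deriving the estimate for the smooth approximants, and passing to the limit), along with care in controlling the vanishing boundary term in the Riemann--Liouville/Caputo conversion. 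Upgrading non-negativity to the strict inequality is the second delicate point, for which the probabilistic representation is cleaner than a direct fractional strong maximum principle.
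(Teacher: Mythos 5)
Your route is genuinely different from the paper's (which argues by duality, pairing $m$ against a backward adjoint equation whose Caputo form falls under Zacher's weak maximum principle), but it contains a gap that is fatal at the level of generality required here. The reduction of \eqref{FPE} to the Caputo form $\fdz m=\Delta m+\diver(v\,m)$ is valid only when the drift is time-independent. Applying $I^{1-\b}_{(0,t]}$ to \eqref{FPE} actually gives
\[
\fdz m=\Delta m+\diver\Bigl(I^{1-\b}_{(0,t]}\bigl[v(t,\cdot)\,\fDz m\bigr]\Bigr),
\]
and the fractional integral does not commute with multiplication by a time-dependent function, so $I^{1-\b}_{(0,t]}[v\,\fDz m]\neq v\,m$ in general. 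This non-commutation is precisely the reason the ``modified'' fractional Fokker--Planck equation \eqref{FPE} carries the Riemann--Liouville derivative \emph{inside} the spatial operator rather than a Caputo derivative on the left-hand side; the two formulations coincide only for $v=v(x)$ (cf.\ the remark following Lemma \ref{positivity}). Since in the MFG application $v=-\nabla u(t,x)$ is time-dependent, your energy estimate is run on the wrong equation, and it cannot be transplanted to the correct one: testing \eqref{FPE} directly against $-m_-$ produces the term $-\int\nabla m_-\cdot\fDz(\nabla m)\,dx$, whose sign is not controlled because $\fDz$ sees the whole history of $\nabla m$ and not just its value at time $t$. By contrast, the paper's adjoint equation converts to Caputo form exactly, because there the backward fractional derivative wraps the \emph{entire} spatial term $\Delta\psi+v\,\nabla\psi$.

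A secondary issue is the strict-positivity upgrade. In the coupled system \eqref{sdegno} the outer process $Y$ is driven by $v(D_t,Y_t)$, so $Y$ and the inverse subordinator $E_t$ are \emph{not} independent, and the clean subordination formula $m(t,\cdot)=\int_0^\infty p^Y_\tau(\cdot)\,f_{E_t}(\tau)\,d\tau$ again holds only for time-independent drift; in general one must work with the joint density $q_\tau(t,x)$ of $(Z,Y)$, and a strictly positive lower bound for it is no longer a one-line Gaussian estimate. If you want a purely analytic argument, the paper's duality scheme delivers strict positivity directly: for every $\Psi\in C^\infty(\T^d)$ with $\Psi>0$ one obtains $\int_{\T^d}m(s,x)\Psi(x)\,dx=\int_{\T^d}\psi(0,x)m_0(x)\,dx>0$, where the positivity of the adjoint solution $\psi$ comes from the maximum principle for Caputo-type parabolic equations.
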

\begin{proof}
The proof is based on duality argument. Take $s\in [0,T]$ and construct (backward) adjoint equation, $\psi=\psi(t,x)$:
\begin{equation}\label{adjoint equation}
\left\{
\begin{array}{ll}
\partial_t \psi+D^{1-\b}_{[t,s)}(\Delta  \psi)+D^{1-\b}_{[t,s)}(v \nabla \psi)=0,\,\,\,(t,x)\in [0,s]\times \T^d,\\
\psi(s,x)=\Psi(x).
\end{array}
\right.
\end{equation}
The terminal condition $\Psi(x)\in C^{\infty}(\T^d)$ and $\Psi(x)>0$. We note that, by taking the backward Riemann-Liouville integral equation $I^{1-\b}_{[t,s)}$ on both sides, equation \eqref{adjoint equation} can be written as a time-fractional parabolic equation with Caputo derivative:
\begin{equation}\label{adjoint equation Caputo}
\left\{
\begin{array}{ll}
\partial^{\b}_{[t,s)} \psi+\Delta  \psi+v \nabla \psi=0,\,\,\,(t,x)\in [0,s]\times \T^d,\\
\psi(s,x)=\Psi(x).
\end{array}
\right.
\end{equation}
By the  maximum principle for parabolic equation with Caputo derivatives (Theorem 3.2 of \cite{Zacher2008}), $\psi(t,x)>0$, $\forall (t,x)\in [0,s]\times \T^d$.\\
\indent We multiply \eqref{adjoint equation} by $m$ and add \eqref{FPE} multiplied by $\psi$, then integrate the resulting expression in $[0,s]\times \T^d$. We observe that:
\begin{align*}
\int_0^s\int_{\T^d}mD^{1-\b}_{[t,s)}(\Delta \psi)dxdt=\int_0^s\int_{\T^d}\psi \Delta(D^{1-\b}_{(0,t]}m)dxdt,\\
\int_0^s\int_{\T^d}mD^{1-\b}_{[t,s)}(v \nabla \psi)dxdt=-\int_0^s\int_{\T^d}\psi \diver(v D^{1-\b}_{(0,t]}m)dxdt,
\end{align*}
therefore,
\begin{equation}\label{positivity}
\int_{\T^d}m(s,x)\Psi(x)dx=\int_{\T^d}\psi(0,x)m_0(x)dx>0.
\end{equation}
Since \eqref{positivity} is satisfied for all $\Psi(x)\in C^{\infty}(\T^d)$ and $\Psi(x)>0$, by density argument we conclude that $m(s,x)>0,\,\,\,\forall s\in [0,T]$.
\end{proof}

\begin{lemma}\label{positivity}
For all $m(t,x)$ that satisfies fractional FP equation \eqref{FPE} with $m_0(x)\geq 0$, we have $\fDz m(t,x)\geq 0$ for all $t\in [0,T]$.
\end{lemma}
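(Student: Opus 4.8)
The plan is to prove the pointwise identity
\begin{equation*}
\fDz m(t,x)=\int_0^\infty p(u,x)\,\vartheta(t,u)\,du ,
\end{equation*}
where $p(u,\cdot)$ is the density of the non--time--changed diffusion $Y_u$ of \eqref{sdegno} (equivalently, the solution of the standard parabolic equation $\partial_u p=\cL p$, $p(0,\cdot)=m_0$, with $\cL=\Delta+\diver(v\,\cdot)$) and $\vartheta(t,u)$ is the density of the $\b$--stable subordinator from \eqref{D_t transform}. Once this representation is in hand the conclusion is immediate: $p\ge 0$ either as a probability density or by Lemma \ref{Positive m} applied to the (non-fractional) equation, and $\vartheta\ge 0$ as the density of $D_u$, so the integrand is nonnegative and hence $\fDz m(t,x)\ge 0$ for every $(t,x)$.

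To derive the representation I would pass to the Laplace transform in time. Writing $\hat m(k,x)=\int_0^\infty e^{-kt}m(t,x)\,dt$ and using the transform rule $\widehat{\fDz m}(k)=k^{1-\b}\hat m(k)-I^{\b}_{(0,t]}m\big|_{t\to 0^+}$, with the boundary term vanishing because $m$ stays bounded near $t=0$, the Fokker--Planck equation \eqref{FPE} becomes $k\hat m-m_0=k^{1-\b}\,\cL\hat m$. Factoring as $k^{1-\b}(k^{\b}-\cL)\hat m=m_0$ gives $\widehat{\fDz m}(k)=k^{1-\b}\hat m(k)=(k^{\b}-\cL)^{-1}m_0=\hat p(k^{\b})$, where $\hat p(s)=\int_0^\infty e^{-su}p(u,\cdot)\,du$. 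Since $e^{-u k^{\b}}=\E[e^{-kD_u}]=\int_0^\infty e^{-kt}\vartheta(t,u)\,dt$ by \eqref{D_t transform}, I obtain
\begin{equation*}
\widehat{\fDz m}(k)=\int_0^\infty p(u,\cdot)\,\E[e^{-kD_u}]\,du=\int_0^\infty e^{-kt}\Big(\int_0^\infty p(u,\cdot)\,\vartheta(t,u)\,du\Big)\,dt ,
\end{equation*}
and inverting the Laplace transform, by its injectivity, yields the claimed formula.

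The main obstacle is that the drift $v=v(t,x)$ is time dependent, so $\cL$ is not a constant-coefficient operator and the manipulation above, which treats $\cL$ as a fixed operator acting only in space, is only formal as written. This is precisely the point at which the stochastic-calculus and subordination results of Section \ref{sec_frac} underlying Theorem \ref{Th_Weak_FP} must be invoked: they identify $m$ rigorously as the law of $X_t=Y_{E_t}$ and supply the subordination relation $\fDz\eta=\vartheta$ between the inverse-subordinator density $\eta$ (with $m=\int_0^\infty p(u,\cdot)\eta(t,u)\,du$) and $\vartheta$, which is what makes the representation valid in the time-dependent case. The remaining points, namely the vanishing of the boundary term and the Fubini exchange in the last display, are routine given the regularity of $m$ and the integrability of $\vartheta$.
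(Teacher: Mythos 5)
Your overall strategy---representing $\fDz m$ as an integral of manifestly nonnegative densities---is the right one and is the same idea the paper uses, and your Laplace-transform computation is essentially identical to the paper's own Remark following the lemma, where it is carried out for autonomous drift $v=v(x)$. However, there is a genuine gap in how you treat the time-dependent case, and it is not merely a matter of rigor to be delegated to Section \ref{sec_frac}. The factorized identity
\begin{equation*}
\fDz m(t,x)=\int_0^\infty p(u,x)\,\vartheta(t,u)\,du,
\qquad\text{equivalently}\qquad
m(t,x)=\int_0^\infty p(u,x)\,\eta(t,u)\,du,
\end{equation*}
requires the outer process $Y$ to be independent of the subordinator $D$ (so that the law of $Y_{E_t}$ disintegrates as a product against the law of $E_t$). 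In the construction \eqref{sdegno} the drift is evaluated at the external time, $dY_\tau=v(D_\tau,Y_\tau)\,d\tau+\sqrt{2}\,dB_\tau$, so for genuinely time-dependent $v$ the pair $(D,Y)$ is coupled and the product formula fails; indeed the breakdown of this naive subordination formula for time-dependent forces is exactly the point of the references (Henry--Langlands--Straka, Magdziarz--Gajda--Zorawik) the paper leans on. Your appeal to ``the subordination relation $\fDz\eta=\vartheta$ \dots which is what makes the representation valid in the time-dependent case'' therefore does not close the gap: that relation is correct, but the representation it is meant to rescue is the one that is false. Relatedly, your $p(u,\cdot)$ is not well defined as ``the solution of $\partial_u p=\cL p$'' when $\cL$ has a time-dependent coefficient, since the drift seen by $Y$ at internal time $u$ is $v(D_u,\cdot)$, which is random.

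The paper's proof fixes this by passing to the \emph{joint} density $q_\tau(z,y)$ of the coupled pair $(Z_\tau,Y_\tau)=(D_\tau,Y_\tau)$ and using the representation
\begin{equation*}
\fDz m(t,x)=\int_0^\infty q_\tau(t,x)\,d\tau,
\qquad
m(t,x)=\int_0^\infty I^{1-\b}_{(0,t]}q_\tau(t,x)\,d\tau,
\end{equation*}
from which nonnegativity is immediate because $q_\tau$ is a probability density. Your formula is recovered as the special case $q_\tau(t,x)=\rho(\tau,x)\vartheta(t,\tau)$ when the system decouples ($v=v(x)$), which is precisely the content of the paper's Remark. To repair your argument you would either need to restrict to autonomous drift or replace the product $p(u,x)\vartheta(t,u)$ by the joint density throughout; the Laplace-transform derivation you give cannot be pushed through verbatim because $(k^\b-\cL)^{-1}$ is not meaningful for a time-dependent $\cL$ acting only in space.
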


\begin{proof} The proof has been essentially obtained by Henry, Langlands, et al. (equation (36) (37) of \cite{Henry2010}) and Magdziarz, Gajda et al. in Theorem 1 of \cite{Magdziarz2014}. We sketch the main ideas for the readers' convenience.\\  
\indent Consider $X_t=Y_{E_t}$ as in \eqref{E_t}. The representation can be formulated as a system of stochastic differential equations:
\begin{equation}\label{stochastic system}
\begin{array}{ll}
dZ_t=dD_t,\\
dY_t=v(Z_t,Y_t)dt+\sqrt{2}dB_t,
\end{array}
\end{equation}
where $D_t$ is the $\b$-stable subordinator. Denote by $q_t(z,y)$ the joint probability density of the process $Z_t,Y_t$. We recall that  $m(t,x)$ denotes the density of process $X_t$, it can be represented as:
\begin{align}
\fDz m(t,x)=\int_0^{\infty}q_{\tau}(t,x)d\tau,\\
m(t,x)=\int_0^{\infty}I_{(0,t]}^{1-\b} q_{\tau}(t,x)d\tau.
\end{align}
Since $q_{t'}(t,x)$ is defined as a probability density function it is obvious that $\fDz m(t,x)\geq 0$.
\end{proof}

\begin{remark}
From Lemma \ref{positivity} it is obvious that the density $m(t,x)>0,\,\forall t\in (0,T]$, if $m_0(x)>0$. This gives a probabilistic interpretation of Lemma \ref{Positive m}.
\end{remark} 
\begin{remark}
In Lemma \ref{positivity}, the conclusion $\fDz m(t,x)\geq 0$ may be counterintuitive at first glance, but essential to justify the modeling with equation \eqref{FPE} and our subsequent arguments. 
Consider a subdiffusive transport system of particles with the density described by \eqref{FPE}. The momentum and kinetic energy are then described respectively $v\fDz m$ and $\frac{1}{2}|v|^2\fDz m$.
If $\fDz m(t,x)\geq 0$ is violated then negative kinetic energy may appear, which is not acceptable from physics point of view.\\
\indent Consider the case of fractional Fokker-Planck equation with time independent drift (v=v(x)). We now show $\fDz m(t,x)\geq 0$ can be obtained in a simpler way and is included in Lemma \ref{positivity} as a special case.
Denote by $\rho(t,x)$ the solution to the classical Fokker-Planck equation with the same initial condition
\begin{equation}
\left\{
\begin{array}{lll}
\partial_t \rho(t,x) -  \Delta \rho(t,x)+\diver(v(x)\rho(t,x)) = 0,\\
\rho(0,x) = m_0(x).
\end{array}
\right.
\end{equation}
There exists the well known relationship between the density functions (Metzler, Klafter, (123) (124) of \cite{Metzler2000}):
$$
\hat{m}(k,x)=\int_0^{\infty}\rho(\tau,x)k^{\beta-1}e^{-\tau k^{\beta}}d\tau,
$$
where $\hat{m}(k,x)$ denotes the Laplace transform of $m(t,x)$.
It can be verified using Laplace transform formula of Riemann-Liouville derivative, that:
$$
\widehat{D^{1-\b}_{(0,t]}m}(k,x)=k^{1-\beta}\int_0^{\infty}\rho(\tau,x)k^{\beta-1}e^{-\tau k^{\beta}}d\tau=\int_0^{\infty}\rho(\tau,x)e^{-\tau k^{\beta}}d\tau.
$$
By using Laplace transform relation \eqref{D_t transform} we obtain
$$
D^{1-\b}_{(0,t]}m(t,x)=\int_0^{\infty}\rho(\tau,x)\vartheta (t,\tau)d\tau\geq 0.
$$
\indent In particular, in the case of drift $v(x)$, the system \eqref{stochastic system} becomes uncoupled :
\begin{equation}
\begin{array}{ll}
dZ_t=dD_t,\\
dY_t=v(Y_t)dt+\sqrt{2}dB_t.
\end{array}
\end{equation}
In this scenario, we have for the joint density $q_t(z,y)=\rho(\tau,x)\vartheta (t,\tau)$. 
\end{remark}

%For the sake of completeness we give here the definition of Monge-Kantorovich distance between probability measures.
%Denote with $\cP_1(\R^d)$ the set of the probability measures over $\R^d$ with finite first moment,  endowed with the Monge-Kantorovich distance
%$$d_1(\mu_1, \mu_2) := \inf_{\g\in\Pi(\mu_1,\mu_2)}\int_{\R^{2d}}|x-y|d\g(x,y),
%$$
%where $\Pi(\mu_1,\mu_2)$ is the set of the Borel probability measures on $\R^{2d}$ with marginal distributions $\mu_1$, $\mu_2$.

%%%%%%%%%%%%%%%%%%%%%%

%%%%%%%%%%%%%%%%%%%%%%%
%                                                                        %

 \section{Time-fractional Mean Field Games}\label{sec_MFG}
In this section we introduce the MFG system and the corresponding variational interpretation. The following conditions are supposed to hold throughout the rest of the paper.
\begin{itemize}
\item[(H1)] The coupling $f:\T^d \times[0,+\infty)\to \R$ is continuous in both variables, increasing with respect to the second variable $m$. Moreover the following normalization condition holds:
$$
f(x,0)=0,\,\,\,\forall x\in \T^d.
$$
\item[(H2)] $ f(x,m)$ is increasing with respect to $m$, i.e., $\forall m_1,m_2\in C([0,T];\mathcal{P}_1)$, $\forall t\in [0,T]$,
$$
\int_{\T^d}( f(x,m_1)- f(x,m_2))(m_1-m_2)\,dx\geq 0.
$$
\item[(H3)] $u_T(x):\T^d\to \R$ is of class $C^2$, while $m_0(x): \T^d\to \R$ is a $C^1$ positive density, i.e. $m_0(x)>0$ and $\int_{T^d}m_0(x)dx=1$.
\end{itemize}
\indent Let us set:
\begin{align*}
F(x,m)=\left \{
\begin{array}{ll}
\int_0^m f(x,\t)d\t, \,\,\text{if} \,\,m\geq 0,\\
+\infty \,\, \text{otherwise}.
\end{array}
\right.
\end{align*}
From conditions (H1) and (H2) it follows that $F(x,m)$ is convex with respect to $m$.\\

We now show the duality between two optimal control problems constrained, respectively, by a fractional HJB equation and a fractional FP equation. 
We start  introducing  a control problem for HJ equation. Denote by $\mathcal{K}_0$ the set of maps $u \in C^2([0,T]\times \T^d)$ such that $u(T,x)=u_T(x)$ and define, on $\mathcal{K}_0$, the functional\\
\begin{equation}\label{infHJ}
\mathcal{A}(u)=\int_0^T \int_{\T^d}  F^{*}(x,-\partial_tu+ \fDT[- \Delta u  +H(x,\nabla u) ])\,dxdt-\int_{\T^d}m_0 u(0,x)dx,
\end{equation}
the Hamiltonian is defined as $H(x,p)=\frac{1}{2}|p|^2$. Here $F^{*}$ denotes the Legendre-Fenchel transform of $F(x,m)$ such that $\forall \alpha$:
$$
F^{*}(x,\alpha)=\sup_{\alpha} \left\{\alpha m- F(x,m)\right\}.
$$
Next we formulate an optimal control problem for the FP equation. We can linearize the constraint \eqref{FPE} by introducing the variable
$$
w(t,x)=-v(t,x)\fDz m(t,x).
$$
%such that $m(t,x)\geq 0$, $\fDz m(t,x)\geq 0$, $\int_{\T^d} m(t,x)dx=1$ for a.e. $t\in (0,T)$. \\
%In fact,
%\begin{equation}
%v(t,x)=\left\{
%\begin{array}{lll}
%-\frac{w(t,x)}{\fDz m(t,x)}\,\,\text{if}\,\,m(t,x)>0,\,\fDz m(t,x)>0,\\
%0\,\,\,\text{otherwise}.
%\end{array}
%\right.
%\end{equation}
Then the FP equation can be written as:
\begin{equation}\label{continuity eqn}
\left\{
\begin{array}{lll}
\partial_t m -   \Delta (\fDz m) + \diver (w) = 0,\,\,\text{in}\,\,(0,T)\times \T^d\\
m(0,x) = m_0(x),
\end{array}
\right.
\end{equation}
where the solution is in the sense of distributions. The Legendre-Fenchel transform of the Hamiltonian can be written as:
$$
H^*\left(x,-\frac{w(t,x)}{\fDz m(t,x)}\right)=\left\{
\begin{array}{lll}
\frac{1}{2}\left|\frac{w(t,x)}{\fDz m(t,x)}\right|^2\,\,&\text{if}\,\fDz m(t,x)>0,\\[3pt]
0 &\text{if}\,(\fDz m,w)=(0,0),\\
+\infty&\text{otherwise.}
\end{array}
\right.
$$
Let us denote $\mathcal{K}_1$ the set of pairs $(m,w)\in L^1((0,T)\times \T^d)\times L^1((0,T)\times \T^d,\,\R^d)$ such that $m(t,x)>0$, $\int_{\T^d}m(t,x)dx=1$ for a.e. $t\in (0,T)$. \\
\indent On the set $\mathcal{K}_1$, define the following functional
\begin{align*}
\mathcal{B}(m,w)=&\int_0^T\int_{\T^d}\fDz m(t,x)H^*\left(x,-\frac{w(t,x)}{\fDz m(t,x)}\right)+ F(x,m(t,x))dxdt\\
&+\int_{\T^d}u_T(x)m(T,x)dx.
\end{align*}
%for $m(t,x)=0$ or $\fDz m(t,x)=0$, we impose that
%\begin{align*}
%\fDz m(t,x)H^*(x,-\frac{w(t,x)}{\fDz m(t,x)})=\left\{
%\begin{array}{lll}
%+\infty \,\,\,\text{if} \,\,\, w(t,x)\neq 0\\
%0\,\,\,\,\,\, \text{if} \,\,\, w(t,x)=0.
%\end{array}
%\right.
%\end{align*}
Since $H^*$ and $F$ are bounded from below and $\fDz m\geq 0$ a.e. (by Lemma \ref{positivity}), the first integral in $\mathcal{B}(m,w)$ is well defined in $\R \cup \{+\infty\}$.\\
\indent We proceed to our main duality result.
\begin{proposition}\label{duality}
We have
\begin{equation}\label{id_duality}
\inf_{u\in \mathcal{K}_0}\mathcal{A}(u)=-\min_{(m,w)\in \mathcal{K}_1}\mathcal{B}(m,w).
\end{equation}
\end{proposition}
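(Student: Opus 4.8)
The plan is to read \eqref{id_duality} as a convex-duality identity and to prove it by the Fenchel--Rockafellar theorem, taking the control problem for the HJ equation as the primal problem (whose value is the left-hand infimum) and the control problem for the FP equation as its dual (whose value is the right-hand side, with the minimum attained). First I would put $\cA$ in saddle form. Writing $F^*(x,\alpha)=\sup_{m\ge0}\{\alpha m-F(x,m)\}$ pointwise gives
\[
\cA(u)=\sup_{m\ge0}\left\{\int_0^T\!\!\int_{\T^d}\big(-\partial_t u+\fDT[-\Delta u+H(x,\nabla u)]\big)\,m\,dx\,dt-\int_0^T\!\!\int_{\T^d}F(x,m)\,dx\,dt-\int_{\T^d}m_0\,u(0,x)\,dx\right\},
\]
so that $\inf_{u\in\cK_0}\cA(u)=\inf_u\sup_{m\ge0}\Phi(u,m)$, where $\Phi(u,m)$ denotes the bracketed expression; the whole proof reduces to exchanging the infimum and the supremum and to evaluating the inner infimum.

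Before exchanging, I would expose the convexity that legitimizes the minimax step, since this is exactly where the fractional derivative intervenes. The only term obstructing convexity is $\int\fDT[H(x,\nabla u)]\,m$, in which the convex map $\nabla u\mapsto H(x,\nabla u)$ sits inside $\fDT$. Applying the fractional integration-by-parts identity \eqref{byparts} to transfer $\fDT$ onto $m$ rewrites this term as $\int H(x,\nabla u)\,\fDz m$, and by Lemma \ref{positivity} the weight $\fDz m$ is nonnegative; hence $\Phi(\cdot,m)$ is convex for each fixed $m\ge0$ while $\Phi(u,\cdot)$ is concave. (Dually, it is the same positivity that makes $\cB$ convex, since $(m,w)\mapsto\fDz m\,H^*(x,-w/\fDz m)$ is the perspective function of $H^*$ precomposed with the linear map $m\mapsto\fDz m$.) The exchange of $\inf_u$ and $\sup_m$ can then be justified by Fenchel--Rockafellar on a suitable pair of spaces, with smooth functions $u$ on the primal side and densities/measures $(m,w)$ on the dual side.

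To evaluate $\inf_u\Phi(u,m)$ for fixed $m\ge0$ I would integrate by parts: in time for the $-\partial_t u$ term, which produces the boundary contributions $-\int_{\T^d}u_T\,m(T)\,dx$ and $\int_{\T^d}u(0,x)\,(m(0,x)-m_0)\,dx$; and in space, combined with \eqref{byparts}, for the $\fDT(-\Delta u)$ term, which produces $-\int u\,\Delta(\fDz m)$. The Hamiltonian term $\int H(x,\nabla u)\,\fDz m$ is dualized by introducing the flux $w$ through the identity $\mu H(x,p)=\sup_w\{-w\cdot p-\mu H^*(x,-w/\mu)\}$, valid for $\mu=\fDz m\ge0$; after one more integration by parts in space it contributes $\int u\,\diver w-\int\fDz m\,H^*(x,-w/\fDz m)$. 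Collecting all terms linear in $u$, freedom of $u$ in the interior forces the continuity equation \eqref{continuity eqn}, and freedom of $u(0,x)$ forces $m(0,x)=m_0$; whenever both hold the $u$-dependent part cancels and the inner infimum equals $-\int\fDz m\,H^*(x,-w/\fDz m)-\int F(x,m)-\int_{\T^d}u_T\,m(T)\,dx$, which is precisely $-\cB(m,w)$. Taking the supremum over admissible $(m,w)$ then yields $\inf_{\cK_0}\cA=-\inf_{\cK_1}\cB$.

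The hard part will be upgrading this formal computation to a rigorous equality with an attained minimum. I expect three main obstacles: (i) verifying the constraint-qualification hypothesis of Fenchel--Rockafellar, i.e. exhibiting one admissible $u$ at which the primal functional is finite and continuous (a smooth function with terminal trace $u_T$ should do, using that $F^*$ is finite and locally bounded); (ii) justifying the fractional integration by parts \eqref{byparts} at the regularity actually available, which forces first working with smooth $u$ and then passing to the limit; and (iii) identifying the abstract dual maximizer, which a priori is only a Radon measure, with a genuine pair $(m,w)\in\cK_1$ --- a nonnegative density of unit mass solving \eqref{continuity eqn} in the sense of distributions. This last relaxation/regularity step, which is what turns the supremum into an attained minimum, is the most delicate point.
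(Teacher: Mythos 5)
Your overall strategy is the same as the paper's: both read \eqref{id_duality} as a Fenchel--Rockafellar identity with the Hamilton--Jacobi control problem as the primal, compute the two conjugates by pointwise Legendre transforms combined with fractional integration by parts, recover the continuity equation \eqref{continuity eqn} and the initial condition from the terms linear in $u$, and are left with the perspective form of $H^*$ that defines $\cB$. The paper phrases this as $\inf_u\{\cF(u)+\cG(\Lambda u)\}$ with $\Lambda(u)=(\partial_t u+\fDT(\Delta u),\nabla u)$ rather than as a minimax, but the computations are the ones you describe.

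There is, however, one step where your argument does not work as stated. To justify exchanging $\inf_u$ and $\sup_m$ you claim that $\Phi(\cdot,m)$ is convex for every $m\ge 0$ because the weight $\fDz m$ in $\int H(x,\nabla u)\,\fDz m$ is nonnegative ``by Lemma \ref{positivity}''. But Lemma \ref{positivity} asserts $\fDz m\ge 0$ only for solutions of the fractional FP equation \eqref{FPE}; in your saddle formulation the supremum runs over arbitrary nonnegative densities $m$, for which $\fDz m$ can change sign (take any $m\ge 0$ decaying rapidly in $t$), so the convex--concave structure you invoke is not available on the whole feasible set. The paper circumvents this: convexity of the primal functional is obtained from the identity $\int_0^T\int_{\T^d}\fDT H(x,b)\,dx\,dt=\int_0^T\int_{\T^d}H(x,b)\,\fDz\cdot 1\,dx\,dt$ together with $\fDz\cdot 1=t^{\b-1}/\G(\b)>0$, a fixed positive weight independent of any dual variable; the sign condition $\fDz m\ge 0$ then \emph{emerges} as part of the finiteness domain of the conjugate $\cG^*$, and Lemma \ref{positivity} is invoked only to check that $\cB$ is well defined on $\cK_1$. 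You should restructure your convexity argument along these lines. Separately, you correctly identify attainment of the minimum as the delicate point but leave it unproved; the paper's argument is short and worth recording: for a minimizing sequence one has $\int w_n^2/\fDz m_n\le C$, Cauchy--Schwarz with the weight $\fDz m_n$ plus mass conservation gives $\int_0^T\int_{\T^d}|w_n|\,dx\,dt\le C\bigl(T^\b/(\b\G(\b))\bigr)^{1/2}$, and combined with the uniform $C^{\b/2}([0,T];\cP(\T^d))$ bound on $(m_n)$ from \cite{Camilli2018} this yields the compactness needed to pass to the limit.
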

\begin{proof}
Let $E_0=C^2([0,T]\times \T^d)$ and $E_1=C([0,T]\times \T^d,\, \R)\times C([0,T]\times \T^d,\,\R^d)$. Define on $E_0$ the functional 
$$
\mathcal{F}(u)=-\int_{\T^d} m_0(x)u(0,x)dx+\chi_S(u),
$$
where $\chi_S$ is the characteristic function of the set $S=\{u\in E_0,\,\, u(T,\cdot)=u_T\}$, i.e., $\chi_S(u)=0$ if $u\in S$ and $+\infty$ otherwise. For $(a,b)\in E_1$, we define:
$$
\mathcal{G}(a,b)=\int_0^T\int_{\T^d}F^*(x,-a(t,x))+\fDT H(x,b(t,x))dxdt.
$$
The functional $\mathcal{F}$ is convex and lower semi-continuous on $E_0$. Since by using Definition \ref{byparts},
$$
\int_0^T\int_{\T^d} \fDT H(x,b(t,x))dxdt=\int_0^T\int_{\T^d}  H(x,b(t,x))\fDz \cdot 1dxdt,
$$
and $\fDz \cdot 1=\frac{t^{\b-1}}{\Gamma(\b)}>0$,
thus $\int_0^T\int_{\T^d} \fDT H(x,b(t,x))dxdt$ is convex and continuous on $E_1$, therefore $\mathcal{G}$ is convex and continuous on $E_1$. Let $\Lambda: E_0\rightarrow E_1$ be the bounded linear operator defined by:
$$
\Lambda(u)=(\partial_t u+\fDT(\Delta u),\,\,\nabla u).
$$
Then we obtain 
$$
\inf_{u\in \mathcal{K}_0}\mathcal{A}(u)=\inf_{u\in \mathcal{K}_0}\{\mathcal{F}(u)+\mathcal{G}(\Lambda(u))\}.
$$
It follows by Fenchel-Rockafellar duality theorem that 
$$
\inf_{u\in \mathcal{K}_0}\{\mathcal{F}(u)+\mathcal{G}(\Lambda(u))\}=\max_{(m,w)\in E_1^{'}} \{ -\mathcal{F}^*(\Lambda^*(m,w))-\mathcal{G}^*(-(m,w)) \}
$$
where $E_1^{'}$ is the dual space of $E_1$, i.e., the set of vector valued Radon measures $(m,w)$ over $[0,T]\times \T^d$ with values in $\R \times \R^{d}$, $E_0^{'}$ is the dual space of $E_0$, $\Lambda^*: E_1^{'}\rightarrow E_0^{'}$ is the dual operator of $\Lambda$ and $\mathcal{F}^{*}$ and $\mathcal{G}^{*}$ are the convex conjugate of $\mathcal{F}$ and $\mathcal{G}$ respectively. From integration by parts 
$$
\int_0^T\int_{\T^d} m\fDT(\Delta u)dxdt=\int_0^T\int_{\T^d} u\Delta  (\fDz m)dxdt,
$$
we obtain:
\begin{align*}
&\mathcal{F}^*(\Lambda^*(m,w))\\
=&\sup_{u\in E_0}\{\langle \Lambda^*(m,w),u\rangle-\mathcal{F}(u)\} \\
=&\sup_{u\in E_0}\{\langle (m,w),\Lambda u\rangle-\mathcal{F}(u)\}\\
=&\sup_{u\in E_0}\{\int_0^T\int_{\T^d} m(\partial_tu+\fDT(\Delta u))dxdt+\int_0^T\int_{\T^d}w\nabla udxdt-\mathcal{F}(u)\}\\
=&\sup_{u\in E_0}\{\int_0^T\int_{\T^d} u(-\partial_tm+\Delta(\fDz m)-\diver w)dxdt+\int_{\T^d}(m_0(x)-m(0,x))u(0,x)dx\\
&-\chi_S(u)+\int_{\T^d}u(T,x)m(T,x)dx\}.
\end{align*}
Therefore
\begin{align*}
\mathcal{F}^*(\Lambda^*(m,w))=\left\{
\begin{array}{lll}
\int_{\T^d} u_T(x)m(T,x)dx\quad &\text{if $m,w$  is a solution of \eqref{continuity eqn},}\\[4pt]
+\infty& \text{otherwise}.
\end{array}
\right.
\end{align*}
%Here 
%$$
%\partial_t m-\Delta \cdot (\fDz m)+\diver (w)=0,\,\,m(0,x)=m_0(x)
%$$
%holds in the sense of distributions.
Moreover 
\begin{align*}
&\mathcal{G}^{*}(m,w)\\
=&\sup_{a\in \R,b\in \R^d} \left\{\int_0^T\int_{\T^d}(am+\langle b,w\rangle)dxdt-\mathcal{G}(a,b)\right\}\\
=&\sup_{a\in \R,b\in \R^d} \int_0^T\int_{\T^d}(am+\langle b,w\rangle)dxdt-\int_0^T\int_{\T^d}F^*(x,-a+\fDT H(x,b)dxdt\\
=&\sup_{a\in \R,b\in \R^d} \int_0^T\int_{\T^d}(m\fDT H(x,b)-am+\langle b,w\rangle-F^*(x,a))dxdt\\
=&\sup_{a\in \R,b\in \R^d}\int_0^T\int_{\T^d}(H(x,b)(\fDz m)-am+\langle b,w\rangle-F^*(x,a))dxdt.
\end{align*}
Hence, for  $\fDz (-m)>0$,
\begin{align*}
&\mathcal{G}^{*}(m,w)\\
=&\sup_{(m,w)\in E_1^{'}} \int_0^T\int_{\T^d}\fDz(-m)\left(\langle b,-\frac{w}{\fDz m}\rangle-H(x,b)\right)\\
&+(-m)a-F^*(x,a)dxdt\\
=& \int_0^T\int_{\T^d} \fDz(-m)H^*\left(x,-\frac{w}{\fDz m}\right)+ F(x,-m)dxdt.
\end{align*}
We can denote:
$$
 \mathcal{G}^{*}(m,w)=\int_0^T\int_{\T^d} \mathcal{K}^{*}(m,w)dxdt,
$$
such that 
\begin{align*}
\mathcal{K}^{*}(m,w)=
\left\{
\begin{array}{lll}
\fDz(-m)H^*\left(x,-\frac{w}{\fDz m}\right)+ F(x,-m)\,\,&\text{if}\,  \fDz m<0, \\[4pt]
0 &\text{if}\,  \fDz m=0,\, w=0,\\
+\infty& \text{otherwise}.
\end{array}
\right.
\end{align*}
Therefore
\begin{align*}
&\max_{(m,w)\in E_1^{'}} \{ -\mathcal{F}^*(\Lambda^*(m,w))-\mathcal{G}^*(-(m,w)) \}\\
=&\max_{(m,w)\in E_1^{'}} \{ \int_0^T\int_{\T^d} -\fDz(m)H^*(x,-\frac{w}{\fDz m})- F(x,m)dxdt\\
&-\int_{\T^d} u_T(x)m(T,x)dx\}\\
=&-\min_{(m,w)\in E_1^{'}} \{ \int_0^T\int_{\T^d} \fDz(m)H^*(x,-\frac{w}{\fDz m})+ F(x,m)dxdt\\
&+\int_{\T^d} u_T(x)m(T,x)dx\} \\
=&-\min_{(m,w)\in E_1^{'}}\mathcal{B}(m,w).
\end{align*}
The minimum is taken over the $L^1$ maps $(m,w)$ such that $m(t,x)\geq 0$ and $\fDz m(t,x)\geq 0$ a.e. and 
\eqref{continuity eqn} holds in the sense of distributions.
%%$$
%%\partial_t m-\Delta \cdot (\fDz m)+\diver (w)=0,\,\,m(0,x)=m_0(x)
%%$$
% 
Since $\int_{\T^d}m_0dx=1$, by Lemma \ref{mass conservation} on mass conservation we have $\int_{\T^d}m(t,x)dx=1$ for any $t\in [0,T]$. Hence the pair $(m,w)$ belongs to the set $\mathcal{K}_1$ and we have proved the duality of the optimal control problems.\\
\indent We next consider a minimizing sequence $(m_n,w_n)$ such that $\mathcal{B}(m_n,w_n)\leq C$, then 
$$
\int_0^T\int_{\T^d}\frac{w_n^2}{\fDz m_n}dxdt\leq C.
$$
It has been shown in \cite{Camilli2018} that the sequence $(m_n)$ is uniformly bounded in $C^{\frac{\beta}{2}}([0,T];\mathcal{P}(\T^d))$. In particular, by H\"{o}lder's inequality, on the set $\fDz m>0$, we have
\begin{align*}
\int_0^T\int_{\T^d}|w_n|dxdt&\leq \left(\int_0^T\int_{\T^d}\frac{w_n^2}{\fDz m_n}dxdt\right)^{\frac{1}{2}}\left( \int_0^T\int_{\T^d}\fDz m_ndxdt\right)^{\frac{1}{2}}\\
&\leq C\left(\int_0^T\fDz (\int_{\T^d}m_n(t,x)dx)dt\right)^{\frac{1}{2}}\\
&= C\left(\frac{T^{\beta}}{\beta \Gamma(\beta)}\right)^{\frac{1}{2}}.
\end{align*}
Therefore, up to a subsequence, $w_n\rightarrow w$ in $\mathcal{M}((0,T)\times \T^d,\R^d)$ and $(m_n)$ converges in $C([0,T];\mathcal{P}(\T^d))$. The limit $(m,w)$ is then the minimizer of $\mathcal{B}(m,w)$.
\end{proof}
\begin{remark}
\indent In general, the existence of a minimizer for the problem $\mathcal{A}(u)$ can be obtained as a solution to the fractional HJ equation in \eqref{MFG}. 
By reversing time (change of variable $t$ by $T-t$) the backward HJB equation is equivalent to 
\begin{equation}\label{FFHJ}
\left\{
\begin{array}{lll}
\partial_tu+ \fDz[- \Delta u  +\frac{1}{2}|\nabla u|^2]=f(x,m),& (t,x) \in (0,T) \times \R^d\\[4pt]
u(0,x) = u_T(x).
\end{array}
\right.
\end{equation}
By taking the fractional integral on both sides of \eqref{FFHJ}, use \eqref{Def Caputo deriv} such that
\begin{align*}
&I^{1-\b}_{(0,t]}(\partial_tu)=\fdz u,\\
&I^{1-\b}_{(0,t]}(\fDz[- \Delta u  +\frac{1}{2}|\nabla u|^2])=- \Delta u  +\frac{1}{2}|\nabla u|^2,
\end{align*}
we obtain 
\begin{equation}\label{HJ_CG}
\left\{
\begin{array}{lll}
\fdz u- \Delta u  +\frac{1}{2}|\nabla u|^2=I^{1-\b}_{(0,t]} f(x,m),& (t,x) \in (0,T) \times \R^d\\[4pt]
u(0,x) = u_T(x).
\end{array}
\right.
\end{equation}
Existence  and uniqueness of a classical solution to \eqref{HJ_CG} (and therefore to \eqref{FFHJ}) has been recently studied in \cite{Camilli2019}. If there exists a classical solution to the backward HJ equation in \eqref{MFG}, then the vector field $\nabla u$ is regular and also the FP equation admits a unique classical solution.
\end{remark}
In the next theorem, we show the connection between the optimal control problems for the fractional FP and HJB equations  and the MFG system \eqref{MFG}.
\begin{theorem} \label{th:OP_MFG}
Assume that $(\bar{m},\bar{u})$ is of class $C^1([0,T]\times \mathcal{P}_1(\T^d))\times C^1([0,T]\times \T^d)$, with $\bar{m}(0,x)=m_0$ and $\bar{u}(T,x)=u_T(x)$.
%	 Moreover, $\bar{m}(x,t)>0$ for any $(t,x)\in [0,T]\times \T^d$. 
Then the following statements are equivalent:\\
(i) $(\bar{m},\bar{u})$ is a solution to the fractional MFG system \eqref{MFG}.\\
(ii) The solution $\bar{u}(x,t)$ is optimal for $\inf_u \mathcal{A}$.\\
(iii) The control $\bar{v}=-\nabla \bar{u}(t,x)$ and $\bar{m}$ are optimal for $\min_{(m,w)}\mathcal{B}$ where $\bar{w}=-\bar{v}\fDz \bar{m}$, $\bar{m}$ is the solution of FP equation \eqref{FPE}.
\end{theorem}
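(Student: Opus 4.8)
The plan is to derive all three statements from a single \emph{pairing identity} between the two functionals, in the spirit of the Benamou--Carlier--Santambrogio/Cardaliaguet variational method. Fix an admissible $u\in\mathcal{K}_0$ and $(m,w)\in\mathcal{K}_1$ and set $\alpha:=-\partial_t u+\fDT[-\Delta u+H(x,\nabla u)]$. First I would expand $\mathcal{A}(u)+\mathcal{B}(m,w)$ and integrate by parts to expose it as an integral of nonnegative Fenchel--Young gaps. The three manipulations needed are: temporal integration by parts on the $-\partial_t u$ term; spatial integration by parts (twice, with no boundary contribution on $\T^d$) on the $\Delta u$ term; and the fractional integration by parts of Definition~\ref{byparts} to move $\fDT$ off $[-\Delta u+H(x,\nabla u)]$ and onto $m$, producing $\fDz m$. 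The matched data $m(0,\cdot)=m_0$ and $u(T,\cdot)=u_T$ make the temporal boundary terms cancel exactly against the two integrals $-\int m_0u(0,x)\,dx$ and $\int u_Tm(T,x)\,dx$ carried by $\mathcal{A}$ and $\mathcal{B}$.

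Using then that $(m,w)$ solves the continuity equation \eqref{continuity eqn}, i.e.\ $\partial_tm-\Delta(\fDz m)=-\diver w$, and one more spatial integration by parts, I arrive at the exact identity
\begin{align*}
\mathcal{A}(u)+\mathcal{B}(m,w)
=&\int_0^T\!\int_{\T^d}\big[F^*(x,\alpha)+F(x,m)-\alpha m\big]\,dx\,dt\\
&+\int_0^T\!\int_{\T^d}\Big[\fDz m\,H(x,\nabla u)+\fDz m\,H^*\!\big(x,-\tfrac{w}{\fDz m}\big)+\langle\nabla u,w\rangle\Big]\,dx\,dt .
\end{align*}
Both integrands are nonnegative: the first by the Fenchel--Young inequality for the convex pair $(F,F^*)$; the second because $\fDz m\ge 0$ a.e.\ (Lemma~\ref{positivity}) and, on $\{\fDz m>0\}$, $H(x,\nabla u)+H^*(x,-w/\fDz m)\ge\langle\nabla u,-w/\fDz m\rangle$, while on $\{\fDz m=0\}$ finiteness of $\mathcal{B}$ forces $w=0$. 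Hence $\mathcal{A}(u)+\mathcal{B}(m,w)\ge 0$ for every admissible pair, and equality holds if and only if both gaps vanish almost everywhere.

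The equality case is exactly the MFG system. Vanishing of the first gap is the Fenchel--Young equality $\alpha=\partial_mF(x,m)=f(x,m)$, which is the fractional HJB equation in \eqref{MFG}; vanishing of the second gap on $\{\fDz m>0\}$ is the feedback relation $-w/\fDz m=\nabla_pH(x,\nabla u)=\nabla u$, equivalently $w=-\nabla u\,\fDz m$ (the link to (iii) through $\bar w=-\bar v\,\fDz\bar m$), which substituted into \eqref{continuity eqn} returns the fractional FP equation in \eqref{MFG}. With this characterization in hand the equivalences follow from the duality $\inf_{\mathcal{K}_0}\mathcal{A}=-\min_{\mathcal{K}_1}\mathcal{B}$ of Proposition~\ref{duality}. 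If (i) holds then both Fenchel--Young identities hold by construction, so the pairing identity gives $\mathcal{A}(\bar u)+\mathcal{B}(\bar m,\bar w)=0$; since the pairing is always $\ge 0$ and $\inf\mathcal{A}=-\min\mathcal{B}$, this forces $\mathcal{A}(\bar u)=\inf\mathcal{A}$ and $\mathcal{B}(\bar m,\bar w)=\min\mathcal{B}$, i.e.\ (ii) and (iii). Conversely, if (ii) and (iii) hold then $\mathcal{A}(\bar u)+\mathcal{B}(\bar m,\bar w)=\inf\mathcal{A}+\min\mathcal{B}=0$, so both gaps vanish and (i) follows; the remaining single implications (ii)$\Leftrightarrow$(iii) are obtained the same way, using that $\mathcal{B}$ attains its minimum (Proposition~\ref{duality}) and that, for the fixed regular drift $\nabla\bar u$, the FP solution $\bar m$ is unique.

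I expect the main obstacle to be the rigorous bookkeeping of the first step, specifically combining the fractional integration by parts with the temporal one so that all boundary terms cancel cleanly, and handling the piecewise definition of $H^*$ across $\{\fDz m=0\}$---this is exactly where the sign information $\fDz m\ge 0$ of Lemma~\ref{positivity} is indispensable, since without it the energy $\fDz m\,H^*$ and the pairing would not be well defined. A secondary point is to check that the assumed $C^1$ regularity of $(\bar m,\bar u)$ legitimizes every integration by parts and lets the recovered equations be read classically; the absence of spatial boundary terms on the torus simplifies this considerably.
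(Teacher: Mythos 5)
Your argument is correct, and it takes a genuinely different route from the paper. You prove everything from one master identity, namely that for any admissible $u\in\mathcal{K}_0$ and any $(m,w)\in\mathcal{K}_1$ solving \eqref{continuity eqn}, the sum $\mathcal{A}(u)+\mathcal{B}(m,w)$ equals an integral of two Fenchel--Young gaps (one for the pair $(F,F^*)$, one for $(H,H^*)$ weighted by $\fDz m$), so that the sum is always $\ge 0$ and vanishes exactly when the MFG optimality conditions hold; the equivalences then drop out of Proposition~\ref{duality}, which gives $\inf\mathcal{A}+\min\mathcal{B}=0$. The paper instead argues by two separate verification computations: for (i)$\Rightarrow$(ii) it compares $J^{HJ}(\alpha)$ with $J^{HJ}(\bar\alpha)$ directly, using convexity of $F^*$ (with $\partial_\alpha F^*=\bar m$) and the gradient inequality $|\nabla u|^2-|\nabla\bar u|^2\ge 2\nabla\bar u\cdot\nabla(u-\bar u)$, then integrates by parts against the FP equation; for (ii)$\Rightarrow$(i) it computes the first variation $\delta J^{HJ}/\delta\alpha$ along perturbations $\bar\alpha+h\,\delta\alpha$, passing through the linearized HJB equation, and reads off that $\bar m:=\partial_\alpha F^*(x,\bar\alpha)$ is a weak solution of the FP equation; and it delegates (ii)$\Leftrightarrow$(iii) entirely to Proposition~\ref{duality}. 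Your pairing identity treats all three statements symmetrically, makes explicit exactly where the sign information $\fDz m\ge 0$ of Lemma~\ref{positivity} enters (in the well-posedness and nonnegativity of the $H$-gap), and recovers the feedback law $w=-\nabla u\,\fDz m$ and the coupling $\alpha=f(x,m)$ as equality cases rather than as inputs; the price is that your single implications, e.g.\ (ii)$\Rightarrow$(iii), additionally rely on the existence of a minimizer of $\mathcal{B}$ and on uniqueness of the FP solution for the drift $\nabla\bar u$, which you correctly flag, whereas the paper's first-variation argument identifies $\bar m$ constructively without invoking the attained minimum. Both are legitimate; yours is closer to the standard duality-gap presentation of variational MFG, the paper's to a classical verification theorem.
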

%%%%%%%%%%%%
\begin{proof} The proof is by verification arguments.\\
``i $\Rightarrow$  ii": assume that $(\bar{m},\bar{u})$ is solution to MFG system \eqref{MFG}. Denote by $\bar{\alpha}$ and $\alpha$ respectively
\begin{align*}
\bar{\alpha}=-\partial_t \bar{u}-\fDT  \Delta \bar{u}+\frac{1}{2}\fDT (|\nabla \bar{u}|^2),\\
\alpha=-\partial_t u-\fDT  \Delta u+\frac{1}{2}\fDT (|\nabla u|^2)\\
\bar{u}(T,x)=u(T,x)=u_T(x,m).
\end{align*}
Thus we can write 
\begin{align*}
\mathcal{A}(u)=J^{HJ}(\alpha)=\int_0^T \int_{\T^d}F^{*}(x,\alpha)\,dxdt-\int_{\T^d}u(0,x)m_0(x)dx
\end{align*}
By definition of Legendre transform $F^*(x,\alpha)$ we know it is convex with regard to $\alpha$, hence 
\begin{align*}
J^{HJ}(\alpha)\geq& J^{HJ}(\bar{\alpha})+\int_0^T \int_{\T^d}  \partial_{\alpha} F^*(\alpha-\bar{\alpha})\,dxdt\\
&-\int_{\T^d} (u(0,x)-\bar{u}(0,x))dm_{0}(x)\\
=&J^{HJ}(\bar{\alpha})+\int_0^T \int_{\T^d} \partial_{\alpha} F^*(-\partial_t (u-\bar{u})-\fDT  \Delta (u-\bar{u})\\
&+\frac{1}{2}\fDT (|\nabla u|^2-|\nabla \bar{u}|^2))\,dxdt-\int_{\T^d} (u(0,x)-\bar{u}(0,x))m_{0}(x)dx.
%\\=&J^{HJ}(\bar{\alpha})+\int_0^T \int_{\Omega} \partial_{\alpha} (\fDT F)^*(-\partial_t (u-\bar{u})+\fDT  \Delta (u-\bar{u})\,dxdt\\
%&+\int_0^T \int_{\Omega}\frac{1}{2}\fDT (|Du|^2-|D\bar{u}|^2))\,dxdt-\int_{\Omega} (u(0,x)-\bar{u}(0,x))dm_{0}(x)
\end{align*}
Since, by definition, $\partial_\alpha F^*=\bar{m}$ and  $|\nabla u|^2-|\nabla \bar{u}|^2\geq 2\nabla \bar{u}(u-\bar{u})$, we obtain
\begin{align*}
&\int_0^T \int_{\T^d}\frac{1}{2}\fDT (|\nabla u|^2-|\nabla \bar{u}|^2))\,dxdt\\
=&\int_0^T \int_{\T^d}\frac{1}{2} (|\nabla u|^2-|\nabla \bar{u}|^2))\frac{t^{\b-1}}{\Gamma(\b)}\,dxdt\\
\geq &\int_0^T \int_{\T^d} \nabla \bar{u}(u-\bar{u})\frac{t^{\b-1}}{\Gamma(\b)}\,dxdt\\
= &\int_0^T \int_{\T^d}\fDT \nabla \bar{u}(u-\bar{u})\,dxdt.
\end{align*}
Thus we have
\begin{align*}
&J^{HJ}(\alpha)\\
\geq&  J^{HJ}(\bar{\alpha})+\int_0^T \int_{\T^d} \bar{m}(-\partial_t (u-\bar{u})-\fDT  \Delta (u-\bar{u})+\fDT \nabla \bar{u}(u-\bar{u}))\,dxdt\\
&-\int_{\T^d} (u(0,x)-\bar{u}(0,x))m_{0}(x)dx\\
=& J^{HJ}(\bar{\alpha})+\int_0^T \int_{\T^d} (u-\bar{u})(\partial_t \bar{m} -  [\Delta \cdot+ \diver (\nabla \bar{u}\cdot)] (\fDz \bar{m}))\,dxdt\\
&-\int_{\T^d} (u(T,x)-\bar{u}(T,x))m(T,x)dx+\int_{\T^d}(u(0,x)-\bar{u}(0,x))(\bar{m}(0,x)-m_0(x))dx.
\end{align*}
By definition $(\bar{m},\bar{u})$ is solution to MFG system \eqref{MFG}, therefore the equation
$$
\partial_t \bar{m} -  [\Delta \cdot+ \diver (\nabla \bar{u}\cdot)] (\fDz \bar{m})=0,\,\,\,\bar{m}(0,x)=m_0,
$$
is satisfied in the sense of distributions. Hence we obtain $J^{HJ}(\alpha)\geq J^{HJ}(\bar{\alpha})$. From the uniqueness of solution to the fractional HJ equation
 $$
 -\partial_t \bar{u}-\fDT  \Delta \bar{u}+\frac{1}{2}\fDT (|D\bar{u}|^2)=\bar{\alpha},\,\,\,\bar{u}(T,x)=u_T(x),
 $$
  we conclude:
$$
\mathcal{A}(u)\geq \mathcal{A}(\bar{u}).
$$
``ii$\Rightarrow$ i": Define $\bar{m}(t,x)=\partial_{\alpha} F^*(x,\bar{\alpha}(t,x))$, then $\bar{\alpha}(t,x)= f(x,\bar{m}(t,x))$. Take a smooth function $\delta \alpha \in C^1([0,T],\T^d)$, denote by $u_h$ the solution to the equation
\begin{equation}\label{uh}
\left\{
\begin{array}{lll}
-\partial_tu+ \fDT[- \Delta u  +\frac{1}{2}|\nabla u|^2 ]=\bar{\alpha}+h\delta \alpha,& (t,x) \in (0,T) \times \T^d\\[4pt]
 u(T,x) = u_T(x).
\end{array}
\right.\end{equation}
We recall that $\bar{u}$ is the solution to the system
\begin{equation}
\left\{
\begin{array}{lll}
-\partial_tu+ \fDT[- \Delta u  +\frac{1}{2}|\nabla u|^2 ]=\bar{\alpha},& (t,x) \in (0,T) \times \T^d\\[4pt]
 u(T,x) = u_T(x).
\end{array}
\right.\end{equation}
Thus $(u_h-\bar{u})/h$ converges to a smooth fucntion $\phi$ which is the solution to the linearized system:
\begin{equation*}
\left\{
\begin{array}{lll}
-\partial_t\phi+ \fDT[- \Delta \phi  +\nabla \bar{u}\cdot D\phi]=\delta \alpha(t,x),& (t,x) \in (0,T) \times \T^d\\[4pt]
 \phi(T,x) = 0.
\end{array}
\right.\end{equation*}
\indent By calculating the first variation and applying mean value theorem we can obtain 
\begin{align*}
&\frac{\delta J^{HJ}(\bar{\alpha})}{\delta \alpha}\\
=&\lim_{h\rightarrow 0}\frac{J^{HJ}(\bar{\alpha}+h\delta \alpha)-J^{HJ}(\bar{\alpha})}{h}\\
=&\int_0^T \int_{\T^d} \lim_{h\rightarrow 0} \frac{F^*(x,\bar{\alpha}+h\delta \alpha)-F^*(x,\bar{\alpha})}{h}\,dxdt- \int_{\T^d} \phi(0,x)m_0(x)dx\\
=&\int_0^T \int_{\T^d} \bar{m}\cdot \delta \alpha \,dxdt-  \int_{\T^d} \phi(0,x)m_0(x)dx\\
=&\int_0^T \int_{\T^d} \bar{m}(-\partial_t\phi+ \fDT[- \Delta w  +\nabla \bar{u}\cdot \nabla \phi]) \,dxdt-  \int_{\T^d} \phi(0,x)m_0(x)dx\\
=&\int_0^T \int_{\T^d}\phi(\partial_t \bar{m} -  [\Delta \cdot+ \diver (\nabla u\cdot)] (\fDz \bar{m}))\,dxdt-\int_{\T^d} \phi(0,x)(m_0(x)-\bar{m}(0,x))dx.
\end{align*}
Since by density argument this holds for any test function $\phi \in C^3$ with  $\phi(T,x) = 0$, thus we obtain $\bar{m}$ is a weak solution to the FP equation in MFG system \eqref{MFG} with $\bar{m}(0,x)=m_0(x)$.\\
\indent Moreover, the equivalence of (ii) and (iii) has been shown in Theorem \ref{duality}.
\end{proof}
%We  give the following notion of solution for \eqref{MFGfr}
%\begin{definition}\label{def:sol}
%A couple $(v, m) \in C([0,T]; C^1_\infty(\R^d))\times C([0,T];\cP_1(\R^d))$ is a solution to \eqref{MFGfr}  if  $v$ is a mild solution of the HJB equation and $m$ is a weak solution   of the FP equation.
%\end{definition}

%%%%%%%%%%%%%%%
%   UNIQUENESS                %
%%%%%%%%%%%%%%%
%%%%%%
\section{Discussions}
In this paper we have focused on studying the variational structure of time-fractional MFG system.  An important consequence  of the variational interpretation of the Mean Field Games is the possibility to show the existence of a solution   to system \eqref{MFG} under weak regularity assumptions.  With smoothing coupling term  more regular solutions may be constructed using results from recent work on time-fractional Hamilton-Jacobi equations \cite{Camilli2019}. One interesting direction will be to study the long time behavior of such time-fractional MFG systems.\\
\indent In \cite{Cesaroni2017} and \cite{Cirant2019}, the authors studied mean field games with jump diffusions. The MFG systems in these models involve fractional Laplacians. In recent works in physics, it has been shown that the existence of phenomena displaying a very interesting a L\'evy walk dynamics with both sub and super diffusion at the same time, due to cumulative inertia and long-range interactions. For modeling  jump diffusion with heavy tailed random waiting time between jumps,  in \cite{Weron2008} Weron, Magdziarz et al. proposed a the one dimensional space-time fractional Fokker-Planck equation
$$
\partial_t m=[-\frac{\partial}{\partial x}(v\cdot)+\nabla^{\mu}\cdot]\fDz m,
$$
where $\nabla^{\mu}$ denotes the Riesz fractional derivative. In our future work we will use such nonlocal Fokker-Planck equation and convex duality methods in this paper to consider space-time nonlocal MFG systems. This can be used in modeling of strategic interactions between large number of ``small'' agents, while the dynamics of each agent involves complex nonlocal in space and time structures. The study of   system  \eqref{MFG} can be considered as a first step  in the direction of extending the MFG theory to more complex, and realistic, nonmarkovian dynamics.

%%%%%%%%%%%%%%%
\section*{Acknowledgement}
 The first author would like to thank the hospitality of Universit\`{a} di Roma ``La Sapienza" during preparation of this work.\\
\indent Qing Tang is partially supported by China National Science Foundation grant No. 11701534.\\
\indent Both authors thank Mirko D'Ovidio (Universit\`{a} di Roma ``La Sapienza")  for helpful discussions about fractional calculus and subordinated processes.

%%%%%%%%%%%%%%%%%%%%%%%%%%%%%%%
%                             %
%%%%%%%%%%%%%%%%%%%%%%%%%%%%%%%
%\section*{References}
\renewcommand*{\bibfont}{\footnotesize}

\end{document}